\newtheorem{theorem}{Theorem}[section]
\newtheorem*{Acknowledgement}{\textnormal{\textbf{Acknowledgement}}}
\newtheorem{proposition}[theorem]{Proposition}
\newtheorem{corollary}[theorem]{Corollary}
\newtheorem{note}[theorem]{Note}
\theoremstyle{definition}
\newtheorem{definition}[theorem]{Definition}
\newtheorem{example}[theorem]{Example}
\newtheorem{Open Prob}[theorem]{Open Problem}
\theoremstyle{remark}
\newtheorem{remark}[theorem]{Remark}
\numberwithin{equation}{section}
\def\DJ{\leavevmode\setbox0=\hbox{D}\kern0pt\rlap{\kern.04em\raise.188\ht0\hbox{-}}D}
\begin{document}

\title[Farthest Point Problem in Normed Linear Spaces]{Farthest Point Problem and Partial Statistical Continuity in Normed Linear Spaces}

\author[S.\ Som, L.K.\ Dey, S. Basu]
{Sumit Som$^{1}$, Lakshmi Kanta Dey$^{2}$, Sudeshna Basu$^{3}$}

 \address{{$^{1}$} Sumit Som,
                    Department of Mathematics,
                    National Institute of Technology
                    Durgapur, India.}
                    \email{somkakdwip@gmail.com}

\address{{$^{2}$} Lakshmi Kanta Dey,
                    Department of Mathematics,
                    National Institute of Technology
                    Durgapur, India.}
                    \email{lakshmikdey@yahoo.co.in}
\address{{$^{3}$}   Sudeshna Basu,
                    Department of Mathematics,
                    George Washington University,
                    Washington DC 20052 USA and
                    Department of Mathematics, 
                    Ram Krishna Mission Vivekananda Education and Research Institute , 
                Belur Math,  Howrah 711202
                West Bengal, India}
                    \email{sbasu@gwu.edu,sudeshnamelody@gmail.com}

\keywords{ Continuity, partial statistical continuity, uniquely remotal, farthest point map, maximizing sequence, approximate fixed point sequence.\\
\indent 2010 {\it Mathematics Subject Classification}.  46B20; 41A65; 41A50; 40A35.}

\date{}

\setcounter {page}{1}



\begin{abstract}{In this paper, we  prove that if $E$ is a uniquely remotal subset of a real normed linear space $X$ such that $E$ has a Chebyshev center $c \in X$ and the farthest point map $F:X\rightarrow E$ restricted to $[c,F(c)]$ is partially statistically continuous at $c$, then $E$ is a singleton. We obtain a necessary condition on uniquely remotal subsets of uniformly rotund Banach spaces to be a singleton. Moreover, we show that there exists a remotal set $M$ having a Chebyshev center $c$ such that the farthest point map $F:\mathbb{R}\rightarrow M$ is not continuous at $c$ but is partially statistically continuous there in the multivalued sense.}
\end{abstract}

\maketitle 

\section{Introduction}

\baselineskip .82cm \small

Let $\mathbb{X}$ be a real normed linear space and $G$ be a nonempty, bounded subset of $X$. Throughout our discussion, we consider only bounded subsets of $X.$ For any $x\in X$, the farthest distance from $x$ to a set $G$ is denoted by $\delta(x,G)$ and is defined by $$\delta(x,G)=~\mbox{sup}\Big\{\Vert x-e \Vert : e\in G\Big\}.$$ If the distance is attained, then the collection of all such points of $G$ corresponding to $x\in \mathbb{X}$ is denoted by $F(x,G)$ and defined by $$F(x,G)=\Big\{e\in G : \Vert x-e \Vert = \delta(x,G)\Big\}.$$ 

For a non-empty and bounded subset $G$ of $X,$ let us define $$r(G)=\Big\{x\in X : F(x,G)\neq \phi\Big\}.$$
$G$ is said to be remotal if $r(G)=\mathbb{X}$ and uniquely remotal if $r(G)=\mathbb{X}$ and $F(x,G)$ is singleton for each $x\in X.$ A nonempty, bounded subset $G$ of $\mathbb{X}$ is said to be densely remotal if $r(G)$ is norm dense in $\mathbb{X}.$ 
 The farthest point problem (FPP) states that ``Must every uniquely remotal set in a Banach space be a singleton?" The FPP was proposed by Motzkin, Starus and Valentine \cite{mo} in the context of the Euclidean space $E^{n}$. The FPP for  Banach spaces was introduced by Klee \cite{kle} and  he proved that every compact uniquely remotal subset of a Banach space is a singleton. In \cite{ass}, Asplund solved the FPP for any finite dimensional Banach space with respect to a norm which is not necessarily symmetric. Very recently Yosef, Khalil and Mutabagani \cite{ykm} proved the FPP for $\ell_{1}$.  In their work \cite {sab}, Sababheh and Khalil proved that a closed and bounded set is remotal in $X$ if and only if $X$ is finite dimensonal. The question was answered in the negative for infinite dimensional spaces by Martin and Rao in  \cite{mr}, where they  showed  the existence of a closed bounded convex non remotal set for every infinite dimensional Banach space. Another  concept, strongly remotal set,  was introduced in \cite{km} by Khalil and Matar, and they showed that all such sets are singleton. Recall that, a Chebyshev center of a subset $E$ of a normed linear space $\mathbb{X}$ is an element $c\in \mathbb{X}$ such that $\delta(c,E)=\inf_{x \in \mathbb{X}}\delta(x,E).$ Chebyshev centers of sets have played a major role in the study of uniquely remotal sets, see \cite{ass}, \cite{na} for more details. In \cite{ni}, it was proved that if $E$ is a uniquely remotal subset of a normed space $\mathbb{X},$ admitting a Chebyshev center at $c \in X $ and if the farthest point map $F:X\rightarrow E$ restricted to $[c,F(c)]$ is continuous at $c$, then $E$ is a singleton.

Now,  we give a brief motivation of our paper. It is clear that, if $\mathbb{X}$ is a normed linear space and $M$ is a non-empty, bounded subset of $\mathbb{X},$ then the farthest point map $F:\mathbb{X}\rightarrow M$ is not always a single valued map, even if  $M$ is a remotal subset of $\mathbb{X}.$ The farthest point map $F:\mathbb{X}\rightarrow M$ is single valued only when $M$ is uniquely remotal. In next section, we first introduce the notion of partial statistical continuity of a single valued function and provide an example to show that the notion of partial statistical continuity is much weaker than continuity as well as partial continuity introduced by Sababheh et al. in \cite{sab}. We prove that if $M$ is an uniquely remotal subset of a real normed linear space $\mathbb{X}$ such that $M$ has a Chebyshev center $c$ and the farthest point map $F:X\rightarrow M$ restricted to $[c,F(c)]$ is partially statistically continuous at $c$ then $E$ is a singleton. This improves the result in \cite{ni}. We also introduce the notion of partial statistical continuity of a multivalued mapping. 
\section{\textbf{MAIN RESULTS}}

 We first recall some definitions and notations.

\begin{definition} \cite[\, Definition 2.1.]{bor} \label{borr}
Let $Y,Z$ be topological spaces. A mapping $F:Y \rightarrow Z$ is said to be multivalued if $F(x)$ is a subset of $Z$ for each $x\in Y.$ The mapping $F$ can be thought of as a single valued function from $Y$ into $2^{Z}$ where $2^{Z}$ denotes the power set of $Z.$
\end{definition}

\begin{definition} \cite[\, Definition 2.2.]{bor} \label{bor1}
Let $F:Y\rightarrow Z$ be a multivalued mapping and $A \subset Y, ~B \subset Z.$ Then 
\begin{enumerate}
\item[(i)] $F(A)=\bigcup \{F(x): x\in A\}$;

\item[(ii)] $F^{-1}(B)=\{x\in X: F(x)\cap B \neq \phi\}$.
\end{enumerate}
\end{definition}

\begin{definition}  \cite[\, Definition 2.3.]{bor} \label{bor2}
Let $F:Y\rightarrow Z$ be a multivalued mapping. Then 
\begin{enumerate}
\item[(i)] $F$ is a usc-function (i.e, upper semi-continuous function) provided $F^{-1}(B)$ is closed in $Y$ for each closed $B\subset Z$;

\item[(ii)] $F$ is a lsc-function (i.e, lower semi-continuous function) provided $F^{-1}(V)$ is open in $Y$ for each open $V\subset Z$;

\item[(iii)] $F:Y\rightarrow Z$ is a continuous function provided that $F$ is a usc-function and lsc-function.
\end{enumerate}
\end{definition}

\begin{definition} \cite{tk}
Let $F:Y\rightarrow Z$ be a multivalued mapping. Then a function $f:Y\rightarrow Z$ is said to be a selection or selector of $F:Y\rightarrow Z$ if $f(x)\in F(x)$ for each $x\in Y.$ Throughout this paper we call a selection by the name extracted single valued function. 
\end{definition}
 
We have the following proposition, we omit the easy proof.

\begin{proposition}\label{mv}
If each extracted single valued function $f:Y\rightarrow Z$ is continuous then the multivalued map $F:Y\rightarrow Z$ is continuous. 
\end{proposition}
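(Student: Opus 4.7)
The plan is to argue by contraposition: assuming $F$ fails to be continuous as a multivalued map, I would construct an extracted single valued function that is not continuous, contradicting the hypothesis. By Definition~\ref{bor2}, the failure of continuity of $F$ means that $F$ is not upper semi-continuous or not lower semi-continuous, and I would handle the two cases separately.

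In the first case, I would pick an open set $V\subset Z$ for which $F^{-1}(V)$ is not open, fix a non-interior point $x_{0}\in F^{-1}(V)$ together with a witness $z_{0}\in F(x_{0})\cap V$, and then define a selection $f$ by setting $f(x_{0})=z_{0}$ and picking $f(x)\in F(x)$ arbitrarily for the remaining $x$. Since every neighbourhood of $x_{0}$ meets $Y\setminus F^{-1}(V)$, and on such points any selection of $F$ lies outside $V$, no neighbourhood of $x_{0}$ can be mapped into the open neighbourhood $V$ of $f(x_{0})$, so $f$ is discontinuous at $x_{0}$.

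In the second case, I would pick a closed set $B\subset Z$ with $F^{-1}(B)$ not closed and a point $x_{0}\in\overline{F^{-1}(B)}\setminus F^{-1}(B)$, so that $F(x_{0})\subset Z\setminus B$. I would then build the selection by choosing any $f(x_{0})\in F(x_{0})$ and, for every other $x$, taking $f(x)\in F(x)\cap B$ whenever that intersection is non-empty and $f(x)\in F(x)$ arbitrarily otherwise. The open neighbourhood $Z\setminus B$ of $f(x_{0})$ cannot be reached by $f$ on any neighbourhood of $x_{0}$, because such a neighbourhood necessarily meets $F^{-1}(B)$ where $f$ takes values in $B$; hence $f$ fails to be continuous at $x_{0}$.

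The only delicacy is the deliberate choice of $f(x_{0})$ so that it lies inside the open neighbourhood ($V$ or $Z\setminus B$) witnessing the obstruction to continuity of $F$; the remaining verifications are routine, with the axiom of choice used implicitly when picking the arbitrary values of $f$ elsewhere on $Y$. This contrapositive argument should yield the proposition.
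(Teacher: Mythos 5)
Your contrapositive argument is correct and complete: if $F$ fails lower (resp.\ upper) semi-continuity in the sense of Definition~\ref{bor2}, the selection you construct is discontinuous at the witnessing point $x_{0}$, since every neighbourhood of $x_{0}$ contains points where the selection is forced to leave the open set $V$ (resp.\ to enter the closed set $B$), so continuity of all selections indeed forces continuity of $F$. The paper omits the proof of Proposition~\ref{mv} entirely (``we omit the easy proof''), so there is no argument to compare against; yours is precisely the routine verification intended. The only implicit assumption, shared with the statement itself, is that $F(x)\neq\emptyset$ for every $x\in Y$ (otherwise no selections exist and the hypothesis is vacuous while $F$ may still be discontinuous); in the paper's setting of farthest point maps of remotal sets this is automatic.
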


We recall the following defnition from \cite{sab}:

\begin{definition}
Let $\mathbb{X}$ be a real normed linear space and $M \subseteq \mathbb{X}.$ A function $F:M\rightarrow \mathbb{X}$ is said to be partially continuous at $a\in M$ if there exists a non constant sequence $\{x_n\}_{n\in \mathbb{N}}\subset M$ such that $\{x_{n}\}_{n\in \mathbb{N}}$ is convergent to $a$ and $\{F(x_n)\}_{n\in \mathbb{N}}$ is convergent to $F(a).$ 
\end{definition}

The notion of usual convergence of a real sequence has been extended to statistical convergence by Fast \cite{s} in the year 1980. Further for more details about statistical convergence one can see \cite{s}. We recall the following definition from \cite{s} as follows:

\begin{definition}
A real sequence $\{x_{n}\}_{n\in \mathbb{N}}$ is said to be statistically convergent to $x\in \mathbb{R}$ if for each $\varepsilon>0,$ 
$$\displaystyle{\lim_{n\rightarrow \infty}}\frac{1}{n} \Big |\Big\{k\leq n : |x_{k}-x| \geq \varepsilon\Big\} \Big |=0.$$ 
\end{definition}

Now we like to introduce the notion of partial statistical continuity for a single valued function in a normed linear space as follows:

\begin{definition}
Let $\mathbb{X}$ be a real normed linear space and $M \subseteq \mathbb{X}.$ A function $G:M\rightarrow \mathbb{X}$ is said to be partially statistically continuous at $a\in M$ if there exists a non-constant sequence $\{x_n\}_{n\in \mathbb{N}}\subset M$ such that $\{x_{n}\}_{n\in \mathbb{N}}$ is statistically convergent to $a$ and $\{G(x_n)\}_{n\in \mathbb{N}}$ is statistically convergent to $G(a)$ i.e. for each $\varepsilon>0,$
$$\displaystyle{\lim_{n\rightarrow \infty}}\frac{1}{n}\Big|\Big\{k\leq n : \Vert x_{k}-a \Vert \geq \varepsilon\Big\}\Big|=0$$ and 
$$\displaystyle{\lim_{n\rightarrow \infty}}\frac{1}{n}\Big|\Big\{k\leq n : \Vert G(x_{k})-G(a) \Vert \geq \varepsilon\Big\}\Big|=0.$$ 
If $G:M\rightarrow \mathbb{X}$ is partially statistically continuous at each $x\in M$ then $G$ is said to be partially statistically continuous on $M.$
\end{definition}

The next example shows that, the notion of partial statistical continuity is much weaker than continuity as well as partial continuity.

\begin{example} \label{ex2}
Let $f: [-1,0]\rightarrow \mathbb{R}$ be defined by $f(x)=[x]~\mbox{for all}~ x\in [-1,0].$ Here $[x]$ denotes the greatest integer not exceeding $x.$ It is easy to check that, this function is not partially continuous (also not continuous) at the point $x=0.$ Now we show that this function is partially statistically continuous at the point $x=0.$ 
Let us define a sequence $\{x_{n}\}_{n\in \mathbb{N}}$ in $[-1,0]$ by
\begin{equation*}
~x_{n}=
\begin{cases}
0  ~~\mbox{if} ~ n\neq m^{2}~\mbox{for all}~ m\in \mathbb{N}, ~&\\
-1+\frac{1}{n}~  ~\mbox{if} ~ n=m^{2}~\mbox{for some}~ m\in \mathbb{N}.
\end{cases}
\end{equation*}
Note that, the sequence $\{x_n\}_{n\in \mathbb{N}}$ is not convergent to $0$ in the usual sense. Let $\varepsilon>0.$ 
Now $\Big\{k\in \mathbb{N}: |x_{k}-0|\geq \varepsilon\Big\}\subseteq \Big\{k\in \mathbb{N}: k=m^{2}~\mbox{for some}~m\in \mathbb{N}\Big\}.$ But the natural density of the set $A=\Big\{k\in \mathbb{N}: k=m^{2}~\mbox{for some}~m\in \mathbb{N}\Big\}$ is $0$ since,
$$\frac{1}{n}\Big|\{k\leq n : k\in A\}\Big|=\frac{[\sqrt{n}]}{n}\leq \frac{\sqrt{n}}{n}\rightarrow 0~\mbox{as}~n\rightarrow \infty.$$
This implies that
$$\displaystyle{\lim_{n\rightarrow \infty}}\frac{1}{n}\Big|\{k\leq n : \vert x_{k}-0 \vert \geq \varepsilon\}\Big|=0.$$ 
So, the sequence $\{x_{n}\}_{n\in \mathbb{N}}$ is statistically convergent to $0.$

Now the sequence $\{f(x_{n})\}_{n\in \mathbb{N}}$ is defined by
\begin{equation*}
~f(x_{n})=
\begin{cases}
0  ~\mbox{if} ~ n\neq m^{2}~\mbox{for all}~ m\in \mathbb{N} ~&\\
-1  ~\mbox{if} ~ n=m^{2}~\mbox{for some}~ m\in \mathbb{N}.
\end{cases}
\end{equation*}
Similarly, it can be shown that the sequence $\{f(x_{n})\}_{n\in \mathbb{N}}$ is statistically convergent to $f(0)=0.$ So, $f$ is partially statistically continuous at the point $x=0.$ So, the notion of partial statistical continuity is much weaker than partial continuity as well as continuity.
\end{example}

We now  introduce the concept of partial statistical continuity for a multivalued mapping.

\begin{definition}
Let $\mathbb{X}$ and $\mathbb{Y}$ be real normed linear spaces and $F: \mathbb{X}\rightarrow \mathbb{Y}$ be a multivalued mapping. The mapping $F: \mathbb{X}\rightarrow \mathbb{Y}$ is said to be partially statistically continuous on $\mathbb{X}$ if each extracted single valued function $f:\mathbb{X}\rightarrow \mathbb{Y}$ of $F$ is partially statistically continuous on $\mathbb{X}.$
\end{definition}

\begin{note}
Let $\mathbb{X}$, $\mathbb{Y}$ be two normed linear spaces. It is to be noted that, if any one of the extracted single valued functions $f:\mathbb{X}\rightarrow \mathbb{Y}$ of a multivalued mapping $F:\mathbb{X}\rightarrow \mathbb{Y},$ fail to be partially statistically continuous at some point of $\mathbb{X},$ then we cannot say anything about the continuity of the multivalued mapping $F:\mathbb{X}\rightarrow \mathbb{Y}$ at that point.
\end{note}

 We show that there exists a remotal set $M$ in the set of all real numbers $\mathbb{R}$ having a Chebyshev center $c\in \mathbb{R}$ such that the farthest point map $F:\mathbb{R}\rightarrow M$ is not continuous at $c\in \mathbb{R}$ but is partially statistically continuous there in the multivalued sense. The following result will be needed in the upcoming example.

\begin{theorem} \cite{kle} \label{kle}
Let $M \subseteq \mathbb{X}.$ If $M$ is compact and uniquely remotal then $M$ is singleton. 
\end{theorem}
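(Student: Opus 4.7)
The plan is to reduce the theorem to the result of \cite{ni} already cited in the introduction: if $M$ is uniquely remotal with a Chebyshev center $c$ and the farthest point map $F$ restricted to $[c,F(c)]$ is continuous at $c$, then $M$ is a singleton. Two ingredients must be supplied in the present compact setting: continuity of the (necessarily single-valued) farthest point map $F:\mathbb{X}\rightarrow M$, and existence of a Chebyshev center for $M$.

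For continuity of $F$, the argument is a subsequence extraction that directly uses compactness of $M$. Let $x_n\rightarrow x$ in $\mathbb{X}$. Since $F(x_n)\in M$ and $M$ is compact, every subsequence of $\{F(x_n)\}$ has a further subsequence converging to some $e^{*}\in M$. Fixing any $e\in M$ and passing to the limit in $\|x_{n_k}-F(x_{n_k})\|\geq \|x_{n_k}-e\|$ yields $\|x-e^{*}\|\geq \|x-e\|$. Since $e\in M$ was arbitrary, $e^{*}\in F(x,M)$, and unique remotality forces $e^{*}=F(x)$. A standard subsequence argument then gives $F(x_n)\rightarrow F(x)$.

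For the Chebyshev center, consider the function $\alpha(x):=\delta(x,M)$. Being the supremum of the $1$-Lipschitz convex functions $x\mapsto\|x-e\|$ over $e\in M$, $\alpha$ is itself $1$-Lipschitz and convex, and boundedness of $M$ makes it coercive: $\alpha(x)\geq \|x\|-\sup_{e\in M}\|e\|$. In the Banach-space setting of the farthest-point problem, one produces a minimizer $c$ of $\alpha$ by combining coercivity (to bound a minimizing sequence $\{y_n\}$), compactness of $M$ (to control the attained-distance points $F(y_n)$ and the closed convex hull $\overline{\mathrm{conv}}(M)$), and completeness of $\mathbb{X}$. With such a $c$ in hand and $F$ continuous on all of $\mathbb{X}$ by the previous step, the hypothesis of the theorem from \cite{ni} is satisfied, and hence $M$ is a singleton.

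The principal obstacle is the existence of the Chebyshev center: in a general normed space, a coercive continuous convex function need not attain its infimum, so one needs either an additional structural hypothesis on $\mathbb{X}$ (such as reflexivity) or a direct fixed-point/iteration argument on the farthest-point map itself. Klee's original proof effectively takes the latter route, bypassing the explicit construction of $c$ at the cost of a more intricate analysis of iterates of $F$ on the compact set $M$.
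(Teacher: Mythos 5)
The paper offers no proof of this statement -- it is quoted from Klee -- so the only question is whether your argument stands on its own, and it does not. Your first step is fine: compactness of $M$ gives remotality, and the subsequence extraction correctly shows that $F$ is norm-to-norm continuous on all of $\mathbb{X}$, which is more than the segment-continuity that Niknam's theorem (or Theorem \ref{ur} here) requires. The genuine gap is exactly the one you flag and then leave open: the existence of a Chebyshev center for $M$. This is not a removable technicality in the generality you are working in -- by a classical example of Garkavi there is a Banach space containing a three-point (hence compact) set with no Chebyshev center, and coercivity plus convexity of $x\mapsto\delta(x,M)$ does not force the infimum over $\mathbb{X}$ to be attained without extra structure (reflexivity, a dual space, etc.). So the reduction to the result of \cite{ni}, as you set it up, does not go through, and the proposal is not a complete proof but a proof of a weaker statement (compact uniquely remotal sets \emph{admitting a Chebyshev center} are singletons).

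There are two standard ways to close the gap, both compatible with what you have already done. First, inspect how the center is actually used in Niknam's argument (reproduced as Theorem \ref{ur} in this paper): one only needs $\delta(c,M)\leq\delta(x,M)$ for $x$ on the segment $[c,F(c)]$. In a Banach space the set $K=\overline{\mathrm{conv}}(M)$ is compact and convex, the function $\delta(\cdot,M)$ is continuous, so it attains a minimum over $K$ at some $c\in K$; since $F(c)\in M\subseteq K$, the whole segment $[c,F(c)]$ lies in $K$ and the needed inequality holds, even though $c$ need not be a global Chebyshev center. Second, one can bypass centers entirely: your continuity step shows $F$ maps the compact convex set $K$ continuously into $M\subseteq K$, so Schauder's fixed point theorem yields $c\in K$ with $F(c)=c$, whence $\delta(c,M)=\Vert c-F(c)\Vert=0$ and $M=\{c\}$. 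Either route completes the argument; as written, yours stops at the admitted obstacle.
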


\begin{example} \label{ex3}
Let $\mathbb{R}$ be the set of all real numbers and $M= [-1,1].$ It was proved in \cite{sabb}, that a closed and bounded set in a finite dimensional space is remotal, so it follows that $M$ is remotal in $\mathbb{R}$. But $M= [-1,1]$ cannot be uniquely remotal. Because if $M= [-1,1]$ is uniquely remotal, then by Theorem \ref{kle}, $M$ will be a singleton. In this case,
$$\delta(0,M)=\mbox{sup}\{\vert x \vert : x\in M\}=1.$$  
Also, $F(0,M)=\{-1,1\}.$ So, in this case, the farthest point map $F:\mathbb{R}\rightarrow M$ is multivalued. 
We show that $c=0$ is a Chebyshev center of $M.$ 

First of all suppose that $x>1.$ So $\vert x-0 \vert>1.$ In this case $\delta(x,M)\geq \vert x-0 \vert>1.$ Now let $x<-1.$ So $x=-y$ for some $y>1.$ In this case $\delta(x,M)\geq \vert x-0 \vert=\vert y \vert>1.$ Now suppose that $x\in M$ and $x>0.$ In this case $\delta(x,M)=\vert x-(-1)\vert= \vert x+1 \vert>1.$ Similarly $\delta(x,M)>1$ for $x<0.$ But $$\delta(0,M)=\mbox{inf}_{x\in \mathbb{R}}\delta(x,M)=1.$$
So $c=0$ is the Chebyshev center of $M.$ 

Now, we show that the multivalued map $F:\mathbb{R}\rightarrow M$ is not continuous, in the sense of the Definition \ref{bor2}. Here, we consider that $M$ has the subspace topology as a subset of $\mathbb{R}.$ So the set, $V=(-1,1]$ is open in $M.$ But $F^{-1}(V)=(-\infty,0],$ which is not open in $\mathbb{R}.$ This shows that $F$ is not lower-semi continuous. Hence $F$ is not continuous. Now we show that the farthest point map $F:\mathbb{R}\rightarrow M$ is partially statistically continuous at $c=0.$ The map $F$ has two extracted single valued functions. Let $F^{*}:\mathbb{R}\rightarrow M$ be the extracted single valued function such that $F^{*}(0)=1$ and $F^{**}:\mathbb{R}\rightarrow M$ be the extracted single valued function such that $F^{**}(0)=-1.$ We show that both $F^{*}$ and $F^{**}$ are partially statistically continuous at $c=0.$

Let us define a sequence $\{x_{n}\}_{n\in \mathbb{N}}$ in $[-1,1]$ by
\begin{equation*}
~x_{n}=
\begin{cases}
0  ~\mbox{if} ~~ n \neq m^{2}~\mbox{for all}~ m\in \mathbb{N},  ~&\\
-1+\frac{1}{n}  ~\mbox{if} ~n= m^{2}~\mbox{for some}~ m\in \mathbb{N}.
\end{cases}
\end{equation*}
In this case, the sequence $\{x_n\}_{n\in \mathbb{N}}$ is statistically convergent to $0$ since for each $\varepsilon>0,$
$$\displaystyle{\lim_{n\rightarrow \infty}}\frac{1}{n}\Big|\Big\{k\leq n : \vert x_{k}-0 \vert \geq \varepsilon\Big\}\Big|=0.$$
Now $F^{*}(x_n)=1$ for all $n\in \mathbb{N}.$ So $\{F^{*}(x_n)\}_{n\in \mathbb{N}}$ is statistically convergent to $F^{*}(0)=1.$ This shows that the extracted single valued function $F^{*}:\mathbb{R}\rightarrow M$ is partially statistically continuous at $c=0.$ In a similar way, we can show that the extracted single valued function $F^{**}:\mathbb{R}\rightarrow M$ is partially statistically continuous at $c=0.$ Hence the farthest point map $F:\mathbb{R}\rightarrow M$ is partially statistically continuous at $c=0.$

\end{example}

\begin{theorem}\label{ur}
Let $E \subset \mathbb{X}.$ Suppose  $E$ is uniquely remotal and  $E$ has a  Chebyshev center $c \in X .$ If the farthest point map $F:X\rightarrow E$ restricted to $[c,F(c)]$ is partially statistically continuous at $c,$ then $E$ is singleton.
\end{theorem}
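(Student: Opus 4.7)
I would proceed by contradiction, adapting the classical argument of Niknam (which shows that ordinary continuity of $F$ at $c$ forces $E$ to be singleton) to the weaker statistical setting. Suppose $E$ has at least two points, so that $p := F(c) \neq c$ and, setting $r := \|c - p\| = \delta(c, E) > 0$, unique remotality gives $\|c - e\| < r$ for every $e \in E \setminus \{p\}$. The hypothesis of partial statistical continuity supplies a non-constant sequence $\{x_n\} \subseteq [c, p]$, which I parameterize as $x_n = (1-t_n)c + t_n p$ with $t_n \in [0, 1]$, such that $\{x_n\}$ converges statistically to $c$ and $\{F(x_n)\}$ converges statistically to $p$. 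Applying the standard extraction principle for statistical convergence simultaneously to $\{x_n\}$ and $\{F(x_n)\}$, and using the non-constancy to ensure that infinitely many retained indices have $t_n > 0$, I would pass to a subsequence along which $t_n \to 0$ with $t_n > 0$ and $q_n := F(x_n) \to p$ both in the ordinary sense.

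On this subsequence, Chebyshev gives $\|x_n - q_n\| = \delta(x_n, E) \geq r$, and combining with $\|x_n - p\| = (1 - t_n) r$ via the triangle inequality yields $\|p - q_n\| \geq t_n r$, so in particular $q_n \neq p$. In the opposite direction, $q_n \in E$ together with the Chebyshev center property gives $\|c - q_n\| \leq r$. Setting $\eta_n := p - q_n$, this reads $\|(p-c) - \eta_n\| \leq r$, and for any support functional $\phi$ of $p - c$ (that is, $\|\phi\| = 1$ and $\phi(p-c) = r$), one obtains $r - \phi(\eta_n) = \phi((p-c) - \eta_n) \leq r$, so that $\phi(\eta_n) \geq 0$ for every such $\phi$.

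The decisive final step uses the one-sided directional derivative of the norm at the nonzero point $p - c$ to expand $\|x_n - q_n\| = \|(p-c) - u_n\|$, with $u_n := t_n(p-c) + \eta_n \to 0$, as
\[
\|x_n - q_n\| \;=\; r - \inf_\phi \phi(u_n) + o(\|u_n\|) \;=\; (1-t_n) r - \inf_\phi \phi(\eta_n) + o(t_n + \|\eta_n\|),
\]
where $\phi$ ranges over the support functionals of $p - c$. The Chebyshev bound $\|x_n - q_n\| \geq r$ then forces $\inf_\phi \phi(\eta_n) \leq -t_n r + o(t_n + \|\eta_n\|)$, which together with $\inf_\phi \phi(\eta_n) \geq 0$ from the previous paragraph and the lower bound $\|\eta_n\| \geq t_n r$ produces the contradiction $r \leq 0$. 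The main technical obstacle is precisely this last step: in a non-smooth normed space the directional derivative is an infimum over a whole family of support functionals rather than a single linear functional, so one must carefully track the asymptotic order of $\|\eta_n\|$ relative to $t_n$ along the extracted subsequence to prevent the remainder term from absorbing $t_n r$.
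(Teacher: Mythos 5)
Your setup (reduction to $c$, the parametrization $x_n=(1-t_n)c+t_np$, the bounds $\|x_n-q_n\|=\delta(x_n,E)\ge r$, $\|c-q_n\|\le r$, $\|p-q_n\|\ge t_nr$, and $\phi(\eta_n)\ge 0$ for support functionals of $p-c$) is sound, but the decisive final step has a genuine gap, and you have essentially flagged it yourself without resolving it. Two things go wrong. First, the expansion $\|(p-c)-u_n\| = r - \inf_\phi \phi(u_n) + o(\|u_n\|)$ is not available in an arbitrary normed space: convexity of the norm only gives the one-sided inequality $\|(p-c)-u_n\| \ge r - \inf_\phi \phi(u_n)$ (each support functional already yields this with no remainder), which points the same way as the Chebyshev bound $\|x_n-q_n\|\ge r$ and hence yields nothing; the reverse inequality with a remainder that is uniformly $o(\|u_n\|)$ is exactly strong subdifferentiability (Bouligand/Hadamard differentiability) of the norm at $p-c$, which holds in finite dimensions but fails in general infinite-dimensional spaces and is not among the hypotheses. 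Second, even granting that expansion, your conclusion is $t_n r \le o(t_n+\|\eta_n\|)$, which is a contradiction only if $\|\eta_n\|=\|p-q_n\|=O(t_n)$; the hypotheses give no rate whatsoever for $q_n\to p$ (only $\|\eta_n\|\ge t_n r$ and $\|\eta_n\|\to 0$), so if $\|\eta_n\|/t_n\to\infty$ the remainder can absorb $t_nr$ and the argument collapses. There is no mechanism in the statistical-continuity hypothesis to supply such a rate, so this is not a technicality to be tracked but a missing idea.

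The paper's proof avoids both issues by choosing the norming functionals at the moving points rather than at the fixed point $p-c$: take $\psi_n\in X^*$ with $\|\psi_n\|=1$ and $\psi_n(F(x_n)-x_n)=\|F(x_n)-x_n\|$. Then $\psi_n(x_n)\le \|F(x_n)\|-\|F(x_n)-x_n\|\le \delta(c,E)-\delta(x_n,E)\le 0$, and since $x_n-c$ is a positive multiple of $F(c)-c$ this gives $\psi_n(F(c)-c)\le 0$ for all $n$; on the other hand $\psi_n(F(c)-c)\ge \|F(x_n)-x_n\|-\|F(x_n)-x_n-(F(c)-c)\|$, and statistical convergence of $F(x_n)-x_n$ to $F(c)-c$ makes the right-hand side statistically convergent to $\|F(c)-c\|$, a contradiction unless $F(c)=c$. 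This works in any normed space, needs no smoothness of the norm, no rate comparison, and not even the extraction of an ordinarily convergent subsequence (it argues directly with statistical limits). I would also note, as a shared caveat, that both your extraction step (retaining infinitely many indices with $t_n>0$) and the paper's own assertion that $x_n=\mu_nF(c)$ with $\mu_n>0$ for all $n$ silently strengthen ``non-constant sequence'' to a sequence whose terms differ from $c$ on a suitable set of indices; with the literal definition the witnessing sequence could equal $c$ except at finitely many places, and neither argument (nor, plausibly, the theorem itself) survives that reading.
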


\begin{proof}
Since $E$ is uniquely remotal, so for each $x\in \mathbb{X}$ there exists unique $e\in E$ such that $||x-e||=\delta(x,E)$ and the farthest point map $F: \mathbb{X}\rightarrow E $ is well defined. If $E$ has a Chebyshev center at $c=0$ then the set $E-\{c\}=\{e-c : e\in E\}$ has Chebyshev center at $c=0.$ So, without loss of generality, we assume that $E$ has Chebyshev center at $c=0.$

Suppose $E$ is not singleton. So we have $F(0)\neq 0.$ It is given that the farthest point map $F:X\rightarrow E$ restricted to $[0,F(0)]$ is partially statistically continuous at $0.$ So there exists a non constant sequence $\{x_n\}_{n\in \mathbb{N}}$ in $[0,F(0)]$ such that $\{x_n\}_{n\in \mathbb{N}}$ is statistically convergent to $0$ and $\{F(x_n)\}_{n\in \mathbb{N}}$ is statistically convergent to $F(0).$

Since $x_{n}\in [0,F(0)]$ so we have $x_{n}=\mu_{n}F(0)$ with $\mu_{n}>0~\forall~n\in \mathbb{N}$ and $\{\mu_{n}\}_{n\in \mathbb{N}}$ is statistically convergent to $0$ as $n\rightarrow \infty.$ Now by Hahn-Banach theorem for real normed linear spaces, for each $n\in \mathbb{N}$ there exists $\psi_{n} \in \mathbb{X}^{*}$ such that $\psi_{n}(F(x_n)-x_n)=||F(x_n)-x_n||$ and $||\psi_n||=1.$ Now,
\begin{eqnarray*}
\psi_{n}(x_n)&=&\psi_n(F(x_n))-\psi_{n}(F(x_n)-x_n)\\
&\leq& ||\psi_{n}||~||F(x_n)||-||F(x_n)-x_n||\\
&=& ||F(x_n)||-||F(x_n)-x_n||\\
&=& ||F(x_n)-0||-||F(x_n)-x_n||\\
&\leq& \delta(0,E)-\delta(x_n,E)\\
&\leq& 0~~(\mbox{as 0 is the Chebyshev center}).
\end{eqnarray*}

So, $$\psi_{n}(x_n)\leq 0~\mbox{for all}~n\in \mathbb{N}$$
$$\Longrightarrow \psi_{n}(\mu_{n}F(0))\leq 0~\mbox{for all}~n\in \mathbb{N}$$
$$\Longrightarrow \mu_{n}\psi_{n}(F(0))\leq 0~\mbox{for all}~n\in \mathbb{N}$$
$$\Longrightarrow \psi_{n}(F(0))\leq 0~\mbox{for all}~n\in \mathbb{N}~\mbox{as}~\mu_{n}>0.$$

Since the sequence $\{F(x_n)-x_n\}_{n\in \mathbb{N}}$ is statistically convergent to $F(0)$ so for each $\varepsilon>0,$ $$\displaystyle{\lim_{n\rightarrow \infty}}\frac{1}{n} \Big|\{k\leq n: ||F(x_k)-x_k-F(0)||\geq \varepsilon\}\Big|=0.$$

Now $$\Big|\Vert F(x_k)-x_k \Vert - \Vert F(0)\Vert \Big|\leq \Big\Vert F(x_k)-x_k-F(0)\Big\Vert.$$

Let $\varepsilon>0.$  So we have 
$$ \Big\{k\in \mathbb{N}: \Big|\Vert F(x_k)-x_k\Vert - \Vert F(0)\Vert \Big|\geq \varepsilon \Big\}\subset \Big\{k\in \mathbb{N}: \Big\Vert F(x_k)-x_k-F(0)\Big\Vert \geq \varepsilon \Big\}.$$ This implies that
$$\displaystyle{\lim_{n\rightarrow \infty}}\frac{1}{n} \Big|\{k\leq n: \Big|\Vert F(x_k)-x_k\Vert -\Vert F(0)\Vert \Big|\geq \varepsilon\}\Big|=0.$$
So the sequence $\{||F(x_k)-x_k||\}_{n\in \mathbb{N}}$ is statistically convergent to $||F(0)||.$ 

Also
\begin{eqnarray*}
\psi_{n}(F(x_n)-x_n)-\psi_{n}(F(0))&=& \psi_{n}(F(x_n)-x_n-F(0))\\
&\leq& ||\psi_{n}||~||F(x_n)-x_n-F(0)||\\
&=& ||F(x_n)-x_n-F(0)||.
\end{eqnarray*}
Since the sequence in the right is statistically convergent to $0,$ so the sequence $\{\psi_{n}(F(0))\}_{n\in \mathbb{N}}$ is statistically convergent to $||F(0)||.$ But this is possible only when $F(0)=0.$ This is a contradiction. This proves that the uniquely remotal set $E$ is singleton.
\end{proof}

\begin{remark}
Theorem \ref{ur} is a generalization and  improvement of Proposition 5 in \cite{ni}.
\end{remark}

\begin{definition} 
Recall, $X$ is a uniformly rotund space if for any   $ 0<\epsilon \leq2, $ there exists some $\delta > 0$  such that for any two vectors 
$x, y \in X, $ such that $\|x\|=\|y\|=1,$ the condition $\|x-y\|\geq \epsilon $ implies $\|\frac{x+y}{2}\| \leq 1-\delta .$ The most natural examples of such spaces are Hilbert spaces and real $L^{p}$ spaces for $1< p< \infty.$
\end{definition}
 
We need the following result due to Klee and Garkavi.

\begin{theorem} \label{klee garkavi}\cite{hh} 
Let $X$ be a uniformly rotund Banach Space and $G \subseteq X$ be bounded, then $G$  has a unique Chebyshev center. 
\end{theorem}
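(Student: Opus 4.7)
The plan is to prove existence and uniqueness of the Chebyshev center separately, using reflexivity for the former and the rotundity inequality for the latter.

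For \emph{existence}, I would work with the function $\phi:X\to\mathbb{R}$ defined by $\phi(x)=\delta(x,G)$. As a supremum of the convex continuous maps $x\mapsto\|x-e\|$ over $e\in G$, it is convex and $1$-Lipschitz; boundedness of $G$ (say $M=\sup_{e\in G}\|e\|$) gives the coercivity bound $\phi(x)\geq\|x\|-M$, so $\inf_X\phi$ is finite and attained only on a bounded set. Next I would invoke the Milman--Pettis theorem: uniform rotundity implies reflexivity. Pick a minimizing sequence $\{x_n\}$; coercivity makes it norm-bounded, reflexivity produces a weak cluster point $x_{n_k}\rightharpoonup c$, and Mazur's theorem (convex and norm-continuous $\Rightarrow$ weakly lower semicontinuous) gives $\phi(c)\leq\liminf_k\phi(x_{n_k})=\inf_X\phi$. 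Thus $c$ is a Chebyshev center.

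For \emph{uniqueness}, I would argue by contradiction: suppose $c_1\neq c_2$ are both Chebyshev centers and let $r=\phi(c_1)=\phi(c_2)$ and $c_0=(c_1+c_2)/2$. The triangle inequality gives $\|c_0-e\|\leq\tfrac12(\|c_1-e\|+\|c_2-e\|)\leq r$ for every $e\in G$, so $\phi(c_0)\leq r$ and hence $\phi(c_0)=r$. Choose $e_n\in G$ with $\|c_0-e_n\|\to r$; the same inequality then forces $\|c_1-e_n\|\to r$ and $\|c_2-e_n\|\to r$. Setting $a_n=c_1-e_n$ and $b_n=c_2-e_n$, we have $\|a_n\|\to r$, $\|b_n\|\to r$ and $\|(a_n+b_n)/2\|\to r$. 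After rescaling $a_n,b_n$ to the unit sphere, the sequential version of the uniform rotundity condition (the contrapositive of the stated definition) yields $\|a_n-b_n\|\to 0$. But $a_n-b_n=c_1-c_2\neq 0$ is constant, a contradiction.

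The main obstacle is the existence half: it hinges on the deep (but classical) Milman--Pettis theorem linking uniform rotundity to reflexivity; once reflexivity is on the table the remaining ingredients (weak compactness of bounded sets and weak lower semicontinuity of convex continuous functions) are routine. The uniqueness half is almost tailor-made for uniform rotundity, the only mildly technical point being the transition from the unit-sphere form of the definition to vectors whose norms merely converge to a common positive limit $r$, which is a short normalization argument.
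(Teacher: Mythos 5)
The paper does not actually prove this statement: it is quoted as a known result of Klee and Garkavi and attributed to Holmes' lecture notes \cite{hh}, so there is no in-paper argument to compare against. Your proposal is a correct and essentially complete proof, and it is the standard classical argument for this theorem: existence of a center via coercivity and convexity of $\phi(x)=\delta(x,G)$, Milman--Pettis reflexivity, weak sequential compactness of bounded sets, and weak lower semicontinuity of convex norm-continuous functions; uniqueness via the midpoint $c_0=(c_1+c_2)/2$, a sequence $e_n\in G$ with $\|c_0-e_n\|\to r$, and the sequential form of uniform rotundity forcing $\|c_1-c_2\|=\|a_n-b_n\|\to 0$. Two small points you should make explicit: the statement tacitly assumes $G\neq\emptyset$ (otherwise $\delta(\cdot,G)$ and the notion of center degenerate), and your normalization step in the uniqueness half requires $r>0$; when $r=0$ the conclusion $c_1=c_2$ already follows from $\|c_1-e_n\|\to 0$ and $\|c_2-e_n\|\to 0$, so this case should be disposed of separately (trivially) before rescaling $a_n,b_n$ to the unit sphere. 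With those two sentences added, your argument is a self-contained proof of the cited theorem, which is arguably a gain over the paper, whose reliance on \cite{hh} leaves the reader to look up the argument.
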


\begin{corollary}
Let $X$ be a uniformly rotund Banach Space and $G \subseteq X.$ If $G$ is uniquely remotal and the farthest point map $F:\mathbb{H}\rightarrow G$ is partially statistically continuous, then $G$ is a singleton.
\end{corollary}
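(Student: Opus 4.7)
The plan is to reduce the corollary directly to Theorem \ref{ur}; the ambient uniform rotundity only serves to supply, via the Klee-Garkavi theorem, the Chebyshev-center hypothesis that Theorem \ref{ur} already required as an input. In that sense this is a packaging argument rather than a fresh proof.

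First I would observe that $G$ is bounded, both because unique remotality forces $\delta(x,G)<\infty$ for every $x\in X$ and because the paper's standing convention restricts attention to bounded subsets. An application of Theorem \ref{klee garkavi} then produces a (unique) Chebyshev center $c\in X$ for $G$, so the map $F:X\rightarrow G$ is well-defined and single-valued in the uniquely remotal setting, and $F(c)$ is a specific point of $G$.

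With $c$ in hand, the standing hypothesis of the corollary — that $F:X\rightarrow G$ is partially statistically continuous — specializes, in particular, to the statement that the restriction $F|_{[c,F(c)]}$ is partially statistically continuous at $c$. At that point all the hypotheses of Theorem \ref{ur} are in force ($G$ is uniquely remotal, admits a Chebyshev center $c\in X$, and $F|_{[c,F(c)]}$ is partially statistically continuous at $c$), and invoking Theorem \ref{ur} yields that $G$ is a singleton.

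The only delicate point I anticipate is checking that the witnessing non-constant sequence demanded by the definition of partial statistical continuity can actually be chosen inside the segment $[c,F(c)]$ rather than merely inside $X$. I would read the corollary's hypothesis as implicitly including this (the authors' Theorem \ref{ur} is stated that way), after which no genuine obstacle remains: the corollary is a direct consequence of combining Theorem \ref{klee garkavi} with Theorem \ref{ur}.
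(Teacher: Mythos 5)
Your proposal is correct and follows exactly the paper's route: the paper's proof is precisely the two-step combination of Theorem \ref{klee garkavi} (uniform rotundity plus boundedness yields a Chebyshev center $c$) with Theorem \ref{ur}. The subtlety you flag about the witnessing sequence lying in $[c,F(c)]$ is a genuine imprecision in the corollary's statement, but the paper's own one-line proof treats the hypothesis in the same way you do, so your reading matches the authors' intent.
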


\begin{proof}
The proof follows immediately from Theorem \ref{klee garkavi} and Theorem \ref{ur}.
\end{proof}

We recall the following well known definition from \cite{sain}. 

\begin{definition}
Let $\mathbb{X}$ be a real normed linear space and $M$ be a nonempty, bounded subset of $\mathbb{X}$. A sequence $\{x_n\}_{n\in \mathbb{N}}\subset M$ is said to be maximizing if there exists $x\in \mathbb{X}$ such that $\Vert x_n-x \Vert \rightarrow \delta(x,M)$ as $n\rightarrow \infty.$
\end{definition}

Likewise, we define,

\begin{definition}
Let $\mathbb{X}$ be a real normed linear space and $M$ be a nonempty, bounded subset of $\mathbb{X}$. A sequence $\{x_n\}_{n\in \mathbb{N}}\subset M$ is said to be statistically maximizing if there exists $x\in \mathbb{X}$ such that $\Big\{\Vert x_n-x \Vert\Big\}_{n \in \mathbb{N}}$ is statistically convergent to $\delta(x,M)=\mbox{sup}\Big\{\Vert x-y \Vert : y\in M\Big\}$ as $n\rightarrow \infty.$
\end{definition}

\begin{proposition}\label{aa}
Let $\mathbb{X}$ be a real normed linear space and $M$ be a nonempty bounded subset of $\mathbb{X}$. If $\{x_n\}_{n\in \mathbb{N}}$ is maximizing in $M$ then $\{x_n\}_{n\in \mathbb{N}}$ is statistically maximizing in $M.$ 
\end{proposition}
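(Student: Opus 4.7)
The plan is to observe that this proposition is a direct instance of the well-known fact that ordinary convergence of a real sequence implies its statistical convergence, specialized here to the sequence of distances $\{\|x_n - x\|\}_{n \in \mathbb{N}}$ and its limit $\delta(x,M)$. So the task reduces to checking that the natural density of the ``bad'' index set is zero whenever the sequence converges in the usual sense.

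Concretely, first I would unpack the hypothesis: since $\{x_n\}_{n\in\mathbb{N}}$ is maximizing, there exists $x \in \mathbb{X}$ such that the real sequence $\{\|x_n - x\|\}_{n \in \mathbb{N}}$ converges to $\delta(x,M)$ in the usual sense. Fix an arbitrary $\varepsilon > 0$. By the definition of usual convergence, there exists $N \in \mathbb{N}$ such that $\big|\|x_n - x\| - \delta(x,M)\big| < \varepsilon$ for all $n \geq N$. Consequently, the index set
\[
A_\varepsilon := \Big\{k \in \mathbb{N} : \big|\|x_k - x\| - \delta(x,M)\big| \geq \varepsilon\Big\}
\]
is contained in the finite set $\{1,2,\dots,N-1\}$.

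Next I would compute the natural density. For every $n \in \mathbb{N}$,
\[
\frac{1}{n}\Big|\{k \leq n : k \in A_\varepsilon\}\Big| \leq \frac{N-1}{n} \longrightarrow 0 \quad \text{as } n \to \infty.
\]
Thus $\lim_{n \to \infty} \frac{1}{n}\big|\{k \leq n : \big|\|x_k - x\| - \delta(x,M)\big| \geq \varepsilon\}\big| = 0$, which is exactly the statement that $\{\|x_n - x\|\}_{n \in \mathbb{N}}$ is statistically convergent to $\delta(x,M)$. Since $\varepsilon > 0$ was arbitrary, this shows that $\{x_n\}_{n \in \mathbb{N}}$ is statistically maximizing, with the same witness $x$.

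There is no real obstacle here; the argument is essentially a one-line invocation of ``convergence implies statistical convergence'', together with the trivial density computation for a finite set. The only thing to be careful about is to use the same vector $x \in \mathbb{X}$ from the maximizing hypothesis as the witness for the statistically maximizing conclusion, so that the sup $\delta(x,M)$ is the same on both sides.
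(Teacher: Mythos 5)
Your proposal is correct and follows essentially the same route as the paper: both unpack the maximizing hypothesis to get usual convergence of $\{\|x_n-x\|\}$ to $\delta(x,M)$, observe that for each $\varepsilon>0$ the bad index set is finite, and conclude its natural density is zero, i.e.\ statistical convergence with the same witness $x$. No gaps; nothing further to add.
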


\begin{proof}
Let the sequence $\{x_n\}_{n\in \mathbb{N}}$ is maximizing. So there exists $x\in \mathbb{X}$ such that $\Vert x_n-x \Vert \rightarrow \delta(x,M)$ as $n\rightarrow \infty.$ Let $\varepsilon>0.$ So the set, $A=\Big\{k\in \mathbb{N}: \Big\vert \Vert x_k-x \Vert - \delta(x,M)\Big\vert \geq \varepsilon\Big\}$ is finite. Thus $$\displaystyle{\lim_{n\rightarrow \infty}}\frac{1}{n} \Big|\{k\leq n : \Big| \Vert x_k-x \Vert - \delta(x,M)\Big| \geq \varepsilon\}\Big|=0.$$ This implies that $\{x_n\}_{n\in \mathbb{N}}$ is statistically maximizing. 
\end{proof}

But the converse of Proposition \ref{aa} is not true. We present an example to support our claim.

\begin{example}
Let $\mathbb{R}$ be the set of all real numbers. Let $M= [-1,1].$ Now 
$$\delta(0,M)=\mbox{sup}\Big\{\vert x \vert : x\in M\Big\}=1.$$
Let us define a sequence $\{x_{n}\}_{n\in \mathbb{N}}$ in $[-1,1]$ by
\begin{equation*}
~x_{n}=
\begin{cases}
0  ~\mbox{if} ~~ n=m^{2}~\mbox{for some}~ m\in \mathbb{N},  ~&\\
1-\frac{1}{n} ~\mbox{if} ~n\neq m^{2}~\mbox{for all}~ m\in \mathbb{N}.
\end{cases}
\end{equation*}

It can be seen that $\{x_{n}\}_{n\in \mathbb{N}}$ is not maximizing. Because in this case, for any $x\in \mathbb{R},$ the real sequence $\Big\{|x_{n}-x|\Big\}_{n\in \mathbb{N}}$ is not convergent in $\mathbb{R}.$ Now we show that $\{x_{n}\}_{n\in \mathbb{N}}=\Big\{\vert x_{n}-0 \vert\Big\}_{n\in \mathbb{N}}$ is statistically convergent to $\delta(0,M)=1.$ Let $0<\varepsilon \leq 1.$ Now

\begin{align}
\frac{1}{n} \Big\vert\{k\leq n : \vert x_{k}-1 \vert \geq \varepsilon\}\Big\vert &= \frac{1}{n} \Big([\sqrt{n}]+d\Big) \nonumber\\
&\leq \frac{\sqrt{n}}{n}+ \frac{d}{n} \nonumber
\end{align}

$$\Longrightarrow \displaystyle{\lim_{n\rightarrow \infty}}\frac{1}{n}\Big\vert\{k\leq n : \vert x_{k}-1 \vert \geq \varepsilon\}\Big\vert = 0.$$
Here $d$ is a finite positive integer. This shows that the sequence $\{x_{n}\}_{n\in \mathbb{N}}$ is statistically maximizing in $M.$ So, we can conclude that, the notion of statistically maximizing sequence is much weaker than maximizing sequence.
\end{example}

\begin{theorem}
Let $M \subseteq \mathbb{X}.$ If $\{x_n\}_{n\in \mathbb{N}}$ is a statistically maximizing sequence in $M$ then $\{x_n\}_{n\in \mathbb{N}}$ is a statistically maximizing sequence in $\overline{M}.$
\end{theorem}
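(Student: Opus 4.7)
The plan is to reduce the conclusion to the single observation that $\delta(x,M)=\delta(x,\overline{M})$ for any nonempty bounded subset $M$ of a real normed linear space; once that equality is in hand, the statistical convergence of $\{\|x_n-x\|\}$ to $\delta(x,M)$ is automatically statistical convergence to $\delta(x,\overline{M})$, and the same sequence $\{x_n\}$, which lies in $M\subseteq\overline{M}$, serves as the witness.

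First I would record the trivial containment $\{x_n\}_{n\in\mathbb{N}}\subset M\subseteq\overline{M}$, so membership of the sequence in the enlarged set is free. I would also note that $\overline{M}$ is bounded whenever $M$ is, so $\delta(x,\overline{M})=\sup\{\|x-y\|:y\in\overline{M}\}$ is a finite real number and the statistical limit we want to verify is well defined.

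The key step is the equality $\delta(x,M)=\delta(x,\overline{M})$. The inequality $\delta(x,M)\leq\delta(x,\overline{M})$ is immediate from $M\subseteq\overline{M}$. For the reverse, pick any $y\in\overline{M}$ and a sequence $\{y_k\}\subset M$ with $y_k\to y$; by continuity of the norm, $\|x-y\|=\lim_k\|x-y_k\|\leq\delta(x,M)$, and taking the supremum over $y\in\overline{M}$ yields $\delta(x,\overline{M})\leq\delta(x,M)$.

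With the equality established, the proof is a one-line substitution: the hypothesis says that for every $\varepsilon>0$,
\[
\lim_{n\to\infty}\frac{1}{n}\Big|\{k\leq n:|\,\|x_k-x\|-\delta(x,M)\,|\geq\varepsilon\}\Big|=0,
\]
and replacing $\delta(x,M)$ by $\delta(x,\overline{M})$ on the left gives exactly the definition of $\{x_n\}$ being statistically maximizing in $\overline{M}$ with the same witness $x$. There is no serious obstacle here; the content of the statement is precisely that the farthest-distance function is invariant under passage to the closure, which in turn is just the standard density-plus-continuity argument sketched above.
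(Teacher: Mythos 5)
Your proposal is correct and follows essentially the same route as the paper: both proofs rest on the observation that $\delta(x,M)=\delta(x,\overline{M})$ (which the paper simply asserts as easy and you verify by density and norm-continuity) and then transfer the statistical limit verbatim, using the same sequence as witness in $\overline{M}$.
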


\begin{proof}
Let $\mathbb{X}$ be a real normed linear space and $M$ be a nonempty bounded subset of $\mathbb{X}.$ Let $x\in \mathbb{X}.$ It can be easily seen that $\delta(x,M)=\delta(x,\overline{M}).$ Let $\{x_n\}_{n\in \mathbb{N}}$ be a statistically maximizing sequence in $M.$ So $\{x_n\}_{n\in \mathbb{N}}$ is also a sequence in $\overline{M}.$ Since $\{x_n\}_{n\in \mathbb{N}}$ is a statistically maximizing sequence so there exists $x\in \mathbb{X}$ such that $\Vert x_n-x \Vert$ is statistically convergent to $\delta(x,M).$ Since $\delta(x,M)=\delta(x,\overline{M})$ so, we have $\Vert x_n-x \Vert$ is statistically convergent to $\delta(x,\overline{M}).$ This implies $\{x_n\}_{n\in \mathbb{N}}$ is also a statistically maximizing sequence in $\overline{M}.$
\end{proof}

Now from \cite{am}, we recall the definition of an approximate fixed point sequence of a mapping defined on a non-empty subset $C$ of a normed linear space $\mathbb{X}.$

\begin{definition}
Let  $C\subseteq X.$ Let $T:C\rightarrow X$ be a mapping. A sequence $\{x_n\}_{n\in \mathbb{N}}$ in $C$ is said to be an approximate fixed point sequence (a.f.p.s in short) for $T$ if $\Vert x_{n}-T(x_n) \Vert \rightarrow 0$ as $n\rightarrow \infty.$
\end{definition}

\begin{theorem}
Let $\mathbb{X}$ be a finite dimensional real normed linear space and $C$ be a non-empty, closed, bounded subset of $X.$ Let $T:C\rightarrow C$ be a mapping and $\{x_n\}_{n\in \mathbb{N}}$ be an a.f.p.s for $T.$ Then $\{x_n\}_{n\in \mathbb{N}}$ is statistically maximizing if and only if $\{T(x_n)\}_{n\in \mathbb{N}}$ is statistically maximizing.
\end{theorem}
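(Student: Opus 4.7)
The plan is to show that for any fixed $x \in \mathbb{X}$, the sequences $\{\Vert x_n-x\Vert\}_{n \in \mathbb{N}}$ and $\{\Vert T(x_n)-x\Vert\}_{n \in \mathbb{N}}$ differ by a sequence that converges to $0$ in the ordinary sense, and hence statistically. From this, both directions of the equivalence follow by using the \emph{same} witness $x$.

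First I would record the triangle inequality
\[
\Big|\Vert T(x_n)-x\Vert - \Vert x_n-x\Vert\Big| \leq \Vert T(x_n)-x_n\Vert,
\]
valid for every $x \in \mathbb{X}$. Since $\{x_n\}_{n\in \mathbb{N}}$ is an a.f.p.s.\ for $T$, we have $\Vert T(x_n)-x_n\Vert \to 0$ in the ordinary sense, so $\Vert T(x_n)-x\Vert - \Vert x_n-x\Vert$ is a null sequence, and in particular is statistically null.

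For the forward implication, suppose there exists $x \in \mathbb{X}$ with $\{\Vert x_n-x\Vert\}$ statistically convergent to $\delta(x,C)$. For arbitrary $\varepsilon > 0$, the elementary inclusion
\[
\Big\{k\leq n: \big|\Vert T(x_k)-x\Vert - \delta(x,C)\big| \geq \varepsilon\Big\} \subseteq A_n \cup B_n,
\]
where $A_n = \{k \leq n : \Vert T(x_k)-x_k\Vert \geq \varepsilon/2\}$ and $B_n = \{k \leq n : |\Vert x_k-x\Vert - \delta(x,C)| \geq \varepsilon/2\}$, together with the facts that $A_n$ is eventually empty (hence has density zero) and $B_n$ has density zero by hypothesis, shows that the left-hand set has natural density $0$. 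Thus $\{\Vert T(x_n)-x\Vert\}$ statistically converges to $\delta(x,C)$, so $\{T(x_n)\}_{n\in \mathbb{N}}$ is statistically maximizing with the same witness $x$.

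For the backward implication, I would run exactly the symmetric argument: starting from a witness $x$ for $\{T(x_n)\}$, I use the same triangle-inequality estimate to transfer statistical convergence to $\{\Vert x_n-x\Vert\}$. The only honest piece of content in the proof is the stability of statistical convergence under perturbation by an ordinary null sequence, which in turn relies on the union of two density-zero sets having density zero. I do not expect a genuine obstacle here; notably, the hypotheses that $\mathbb{X}$ be finite dimensional and $C$ closed and bounded are not used in the argument beyond guaranteeing that $\delta(x,C)<\infty$, which already follows from boundedness of $C$.
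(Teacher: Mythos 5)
Your proof is correct, and its core is exactly the paper's argument: the estimate $\big|\Vert T(x_k)-x\Vert-\Vert x_k-x\Vert\big|\leq \Vert T(x_k)-x_k\Vert$ followed by the inclusion of the ``bad'' index set in the union of two density-zero sets, with the converse handled by symmetry (the paper likewise only writes out one direction). The one genuine difference is that the paper first invokes remotality of $C$ --- via finite dimensionality, closedness and boundedness and Theorem B of \cite{sabb} --- to produce $p\in C$ with $\delta(x,C)=\Vert x-p\Vert$, and then runs the same computation with $\Vert x-p\Vert$ in place of $\delta(x,C)$. As you correctly observe, this step is purely cosmetic: $\delta(x,C)$ is just a finite real number (boundedness of $C$ suffices), so your version proves the statement without using finite dimensionality or closedness at all, which is a small but real sharpening of the hypotheses. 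One phrasing nit: your set $A_n=\{k\leq n:\Vert T(x_k)-x_k\Vert\geq \varepsilon/2\}$ is not ``eventually empty'' --- it is contained in a fixed finite set of indices for all $n$ --- but that still gives $|A_n|/n\rightarrow 0$, so the conclusion is unaffected.
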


\begin{proof}
Let $\mathbb{X}$ be a finite dimensional real normed linear space and $C$ be a non-empty, closed, bounded subset of $X.$ Let $T:C\rightarrow C$ be a mapping and $\{x_n\}_{n\in \mathbb{N}}$ be an a.f.p.s for $T.$ So $\Vert x_{n}-T(x_n) \Vert \rightarrow 0$ as $n\rightarrow \infty.$ Since every closed bounded subsets in a finite dimensional space $\mathbb{X}$ is remotal \cite[\, Theorem B.]{sabb}, we conclude that $C$ is remotal. 

First of all suppose that $\{x_n\}_{n\in \mathbb{N}}$ is statistically maximizing. So there exists $x\in \mathbb{X}$ such that $\{\Vert x_{n}-x \Vert \}_{n\in \mathbb{N}}$ is statistically convergent to $\delta(x,C).$ Since $C$ is remotal, there exists $p\in C$ such that $\delta(x,C)=\Vert x-p \Vert.$ Since  $\Vert x_{n}-T(x_n) \Vert \rightarrow 0$ as $n\rightarrow \infty,$ we have, $\{\Vert x_n-T(x_n) \Vert\}_{n\in \mathbb{N}}$ is statistically convergent to $0.$ We show that  $x\in X$ serves our purpose. Now, 
\begin{align*}
\Big |\Vert T(x_n)-x \Vert - \Vert x-p \Vert \Big | \leq& \Big |\Vert T(x_n)-x \Vert - \Vert x_n-x \Vert\Big |+ \Big |\Vert x_n-x \Vert - \Vert x-p \Vert \Big |\\
\leq& \Vert x_n- T(x_n) \Vert + \Big |\Vert x_n-x \Vert - \Vert x-p \Vert \Big |.
\end{align*}

Now let $\varepsilon>0.$ Since $\big \{\Vert x_n-T(x_n) \Vert\big \}_{n\in \mathbb{N}}$ is statistically convergent to $0$ so for $\frac{\varepsilon}{2}>0,$ we have

$$\displaystyle{\lim_{n\rightarrow \infty}}\frac{1}{n}\Big |\Big\{k\leq n : \Vert x_k -T(x_k) \Vert \geq \frac{\varepsilon}{2} \Big\}\Big|=0.$$

Also since $\big \{\Vert x_{n}-x \Vert \big \}_{n\in \mathbb{N}}$ is statistically convergent to $\delta(x,C)=\Vert x-p \Vert$ so

$$\displaystyle{\lim_{n\rightarrow \infty}}\frac{1}{n} \Big|\Big\{k\leq n : \Big |\Vert x_k-x \Vert - \Vert x-p \Vert \Big | \geq \frac{\varepsilon}{2}\Big\}\Big|=0.$$

Now
$$\Big\{k\leq n : \Big |\Vert T(x_k)-x \Vert - \Vert x-p \Vert \Big | \geq \varepsilon \Big\}\subseteq \Big\{k\leq n : \Vert x_k - T(x_k) \Vert \geq \frac{\varepsilon}{2} \Big\}\bigcup\Big\{k\leq n : \Big |\Vert x_k-x \Vert - \Vert x-p \Vert \Big | \geq \frac{\varepsilon}{2}\Big\}$$

\begin{align*}
\Longrightarrow \frac{1}{n}\Big |\Big\{k\leq n : \Big |\Vert T(x_k)-x \Vert - \Vert x-p \Vert \Big | \geq \varepsilon \Big\}\Big|\leq & \frac{1}{n} \Big|\Big\{k\leq n : \Vert x_k - T(x_k) \Vert \geq \frac{\varepsilon}{2} \Big\}\Big|\\
+&\frac{1}{n} \Big|\Big\{k\leq n : \Big |\Vert x_k-x \Vert - \Vert x-p \Vert \Big | \geq \frac{\varepsilon}{2}\Big\}\Big|
\end{align*}

$$\Longrightarrow \displaystyle{\lim_{n\rightarrow \infty}}\frac{1}{n}\Big |\Big\{k\leq n : \Big |\Vert T(x_k)-x \Vert - \Vert x-p \Vert \Big | \geq \varepsilon \Big\}\Big|=0.$$
Which implies that the sequence $\{\Vert T(x_n)-x \Vert\}_{n\in \mathbb{N}}$ is statistically convergent to $\Vert x-p \Vert=\delta(x,C).$ So $\{T(x_n)\}_{n\in \mathbb{N}}$ is statistically maximizing. 

Proceeding similarly, one can show that  if $\{T(x_n)\}_{n\in \mathbb{N}}$ is statistically maximizing, then the sequence $\{x_n\}_{n\in \mathbb{N}}$ is statistically maximizing. 
\end{proof}

\begin{Acknowledgement}
The Research is funded by the Council of Scientific and Industrial Research (CSIR), Government of India under the Grant Number: $25(0285)/18/\mbox{EMR-II}$.
The third author is very grateful to the first and second authors for inviting her to National Institute of Technology, Durgapur and the kind hospitality extended towards  her.
\end{Acknowledgement}


\begin{thebibliography}{99}

\bibitem{am} Amini-Harandi, A; Fakhar, M; Hajisharifi, H.R; \textit{Approximate fixed points of $\alpha$-non-expansive mappings}, J. Math. Anal. Appl. {\bf 467} (2) (2018) 1168-1173.


\bibitem{ass} Asplund, E; \textit{Sets with unique farthest points}, Israel J. Math. {\bf 5} (1967) 201-209.


\bibitem{bor} Borges, Carlos J.R; \textit{A study of multivalued functions}, Pacific J. Math. {\bf 23} (3) (1967) 451-461.











\bibitem{hh} Holmes, R.B; \textit{A course in optimization and best approximation}, Lecture Notes in Maths.  {\bf 257} (1972) Springer-Verlag, New York, London.

\bibitem{tk} Kaczynski. T; \textit{Multivalued maps as a tool in modeling and rigorous numerics}, J. Fixed Point Theory Appl. {\bf 4} (2) (2008) 151-176.


\bibitem{km}  Khalil, R;, Matar, N; \textit {Every Strongly Remotal Subset In Banach Spaces is a
	Singleton} British J.  Mathematics and  Computer Science
{\bf 5} (1): 28-34, 2015.

\bibitem{kle} Klee. V; \textit{Convexity of Chebyshev sets}, Mathematische Annalen {\bf 142} (1961) 169-178.

\bibitem{mr} Martin, M; Rao, T.S.S.R.K; \textit{ On remotality for convex sets in Banach Spaces}, J. Approximation Theory {\bf 162} (2010) 392--396.

\bibitem{mo} Motzkin, T.S; Straus, E.G; Valentine, F.A; \textit{The number of farthest points}, Pacific J. Math. {\bf 3} (1953) 221-232.

\bibitem{na} Narang, T. D; \textit{On singletoness of uniquely remotal sets}, Period Math. Hungar. {\bf 21} (1990) 17-19.

\bibitem{ni} Niknam, A; \textit{Continuity of the farthest point map}, Indian J. Pure Appl. Math. {\bf 18} (7) (1987) 630-632.

\bibitem{sab} Sababheh, M; Yousef, A; Khalil, R; \textit{Uniquely remotal sets in Banach spaces}, Filomat {\bf 31} (9) (2017) 2773-2777.

\bibitem{sabb}
Sababheh, M; Khalil, R; \textit{Remotality of closed bounded convex sets in reflexive spaces}, Numer. Funct. Anal. Optim. {\bf 29} (9-10) (2008) 1166--1170.

\bibitem{sain} Sain, D; Paul, K; Ray, A; \textit{Farthest point problems and $M$-compact sets}, J. Nonlinear Convex Anal. {\bf 18} (3) (2017) 451-457.

\bibitem{s} \v{S}al\'{a}t, T; \textit{On statistically convergent sequences of real numbers}, Math. Slovaca {\bf 30} (1980) 139-150.



\bibitem{ykm} Yousef, A; Khalil, R;  Mutabagani, J;  \textit {On The Farthest Point Problem in Banach Spaces}, 
To appear in {J. Comp. Anal. Appl.} (2020)

\end{thebibliography}
\end{document}